\theoremstyle{plain}
\newtheorem{theorem}{Theorem}[section]
\newtheorem{prop}[theorem]{Proposition}
\newtheorem{lemma}[theorem]{Lemma}
\newtheorem{obs}[theorem]{Observation}
\newtheorem{fact}[theorem]{Facts}
\theoremstyle{definition}
\newtheorem{definition}[theorem]{Definition}
\newcommand{\m}{\mathfrak{m}}
\begin{document}
\title[An observation on generalized Hilbert-Kunz functions]
{An observation on generalized Hilbert-Kunz functions}

\author{Adela Vraciu}
\address{Adela Vraciu\\Department of Mathematics\\University of South Carolina\\\linebreak 
Columbia\\ SC 29208\\ U.S.A.} \email{vraciu@math.sc.edu}

\subjclass[2010]{13A35}
\keywords{Hilbert-Kunz multiplicity, generalized Hilbert-Kunz multiplicity}

\thanks{Research partly supported by NSF grant  DMS-1200085}

\begin{abstract}
We prove that, under certain assumptions, generalized Hilbert-Kunz multiplicities can be expressed as linear combinations of classical Hilbert-Kunz multiplicities.
\end{abstract}
\maketitle

\section{Introduction}

Let $(R, \mathfrak{m})$ be a local ring of positive characteristic $p$ and Krull dimension $d$. Assume that $R$ is $F$-finite and has perfect residue field. The classical Hilbert-Kunz function and multiplicity are defined for $R$-modules of finite length, and in particular for $\m$-primary ideals.
\begin{definition}
Let $M$ be a finite length $R$-module. The Hilbert-Kunz function of $M$ is
$$
f_{HK}^M(n)= l(F^n(M))
$$
where $F^n(M)=M\otimes_R {^{n}R}$ denotes the $n$-fold iteration of the Frobenius functor, and the Hilbert-Kunz multiplicity of $M$ is
$$
e_{HK}(M)= \lim_{n \rightarrow \infty} \frac{f_{HK}^M(n)}{p^{nd}}.
$$
In particular, if  $I\subset R$ is an $\m$-primary ideal, the Hilbert-Kunz function of $R/I$ is 
$$f_{HK}^{R/I}(n):= l\left(\frac{R}{I^{[p^n]}}\right).$$
\end{definition}
These concepts were introduced by Kunz in \cite{Ku}. The fact that the limit in the definition of $e_{HK}(M)$ exists was proved by Monsky in \cite{Mo}.
Hilbert-Kunz functions and multiplicities have connections to the theory of tight closure and the classification of singularities, see \cite{Hu2}. 

Epstein and Yao in \cite{EY}  generalize the definition of  Hilbert-Kunz functions to the case $\mathrm{dim}(M)>0$ by using $0^{\mathrm th}$ local cohomology. The following is a special case of the definition in \cite{EY}, where a relative version is considered.
\begin{definition}
Let $M$ be a finitely generated $R$-module. The generalized Hilbert-Kunz function of $M$ is
$$
f_{gHK}^M(n)= l\left(H_{\m}^0(F^n(M))\right).
$$
\end{definition}
One drawback of this notion is that the limit $\displaystyle \lim_{n\rightarrow \infty} f_{gHK}^M(n)/p^{nd}$ that one would naturally want to use as the definition of the generalized Hilbert-Kunz multiplicity is not known to exist in general.  One is thus forced to define $e^+_{gHK}(M)$ and $e^-_{gHK}(M)$ as the limsup and the liminf respectively.

This existence of the limit is proved in \cite{DS} for modules that have finite projective dimension on the punctured spectrum, with certain assumptions on the ring.

A related problem that involves the $0^{th}$ local cohomology of Frobenius powers is the following long-standing conjecture, which will be referred to as the (LC) property in this note (the conjecture being that all rings of characteristic $p$ satisfy (LC)). We will need to use a more general version, where sequences of ideals other than Frobenius powers of a fixed ideal are involved.
\begin{definition}

({\bf 1.}) We say that a sequence of ideals $J_q$ indexed by $q=p^n$ satisfies the (LC) property if there exists $N$ such that $$\displaystyle \m^{Nq}H_{\m}^0\left(\frac{R}{J_q}\right)=0 \ \forall q=p^n.$$

({\bf 2.}) We say that the ring $R$ satisfies (LC) if for every ideal $J$ in $R$, the sequence of Frobenius powers  $J^{[q]}$ satisfies (LC).
\end{definition}


(LC) was previously studied in connection with the question of whether tight closure commutes with localization (which is now known to be false, see \cite{Br2}), and specifically whether a localization of a ring in which all ideals are tightly closed will continue to have this property (which is still an open question). See the discussion following Prop. 4.16 in \cite{HH1} for the connection between (LC) and the problem of localization of tight closure. See also \cite{Ab} for some work in this direction.

(LC) is known to hold in certain cases, such as  when $R$ is a graded ring and $J_q=J^{[q]}$ where $J$ is a homogeneous ideal with $\mathrm{dim}(R/J)=1$ (see \cite{Hu}, \cite{Vr}).

In \cite{DS} it is observed that if $R$ satisfies (LC) and countable prime avoidance, then $e^+_{gHK}(M)$ is finite for any finitely generated module $M$. The main result of this paper is a significant improvement of this observation.

More precisely, we show that if  $R$ satisfies (LC) and countable prime avoidance, then generalized Hilbert-Kunz functions of ideals can be expressed as linear combinations of classical Hilbert-Kunz functions (in particular, the generalized Hilbert-Kunz multiplicity exists, and can be expressed in terms of classical Hilbert-Kunz multiplicities of related ideals). Countable prime avoidance is a mild assumption, known to hold if the ring is complete, or if the residue field is uncountable, see \cite{Bu}.
By way of motivation for this work, we point out that Brenner's construction of an irrational Hilbert-Kunz multiplicity in \cite{Br} involved generalized Hilbert-Kunz multiplicities in an essential way. Namely, Brenner first constructs a (non $\m$-primary) ideal $I$ for which $e_{gHK}(R/I)$ is irrational, then he uses that to obtain  (in a non-explicit manner) a finite length module with irrational Hilbert-Kunz multiplicity, and as a final step he obtains an $\m$-primary ideal with irrational Hilbert-Kunz multiplicity.  If one could check that (LC) holds for the Frobenius powers of the ideal $I$ obtained in the first step of Brenner's construction, our result could then be used to obtain a more direct and explicit route to $\m$-primary ideals with irrational Hilbert-Kunz multiplicities.

We recall some basic facts and definitions.
\begin{definition}
Let $(R, \m)$ be a local ring, and let $I \subset R$ be an ideal. Then $\displaystyle I^{sat}= \bigcup_n (I : \m^n)$, and $\displaystyle H_{\m}^0(\frac{R}{I})=\frac{I^{sat}}{I}$.
\end{definition}
The following facts are well-known. We include a proof for the reader's convenience.
\begin{fact}\label{facts}
{\rm a.} Let $I=Q_1 \cap Q_2 \cdots \cap Q_t$ be a primary decomposition of $I$. Assume that $Q_t$ is the $\m$-primary component (possibly $Q_t=R$ if $\m$ is not an associated prime), and $Q_1, \ldots, Q_{t-1}$ are primary to non-maximal prime ideals. Then $I^{sat} = Q_1 \cap Q_2 \cap \cdots \cap Q_{t-1}$.

{\rm b.} If $s\in R$ does not belong to any associated prime ideal of $R/I$ except for the maximal ideal, then we have $(I:s) \subseteq I^{sat}$. If moreover $s\in \m^n$ and $\displaystyle \m^n H_{\m}^0(\frac{R}{I})=0$, then $(I:s)= I^{sat}$. 

{\rm c.} If $R$ satisfies countable prime avoidance, and $J_q$ is a sequence of ideals that satisfies (LC), there exists $s \in \m$ such that $$ H_{\m}^0(\frac{R}{J_q})= \frac{(J_q:s^q)}{J_q} \ \forall q=p^e$$
\end{fact}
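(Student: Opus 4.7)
The plan for part (a) is to unpack the definition $I^{sat} = \bigcup_n (I : \m^n)$ and compare it against the given primary decomposition. For the containment $Q_1 \cap \cdots \cap Q_{t-1} \subseteq I^{sat}$: since $Q_t$ is $\m$-primary, $\m^n \subseteq Q_t$ for $n$ large, so any $x \in Q_1 \cap \cdots \cap Q_{t-1}$ satisfies $\m^n x \subseteq I$. For the reverse containment, I would use that for each $i < t$ the prime $P_i = \sqrt{Q_i}$ is strictly contained in $\m$, so no power of $\m$ is contained in $P_i$; from $\m^n x \subseteq Q_i$ together with $Q_i$ being $P_i$-primary one then concludes $x \in Q_i$, hence $x \in Q_1 \cap \cdots \cap Q_{t-1}$.

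Part (b) will follow quickly from (a). For the containment $(I:s) \subseteq I^{sat}$: if $sy \in I$ then $sy \in Q_i$ for each $i$, and since $s \notin P_i$ for $i < t$, primary decomposition gives $y \in Q_i$ for every $i < t$, so $y \in I^{sat}$ by (a). For the equality under the additional hypotheses, I would use the identification $I^{sat}/I = H_\m^0(R/I)$, which gives $\m^n I^{sat} \subseteq I$; with $s \in \m^n$ this yields $s I^{sat} \subseteq I$, i.e.\ $I^{sat} \subseteq (I : s)$.

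Part (c) is where the real content lies. Let $N$ be the constant provided by the (LC) hypothesis, so that $\m^{Nq} H_\m^0(R/J_q) = 0$ for every $q = p^e$. The set $\mathcal{P}$ consisting of all non-maximal associated primes of the quotients $R/J_q$, as $q$ varies, is a countable union of finite sets and hence countable. For any $P \in \mathcal{P}$, the strict containment $P \subsetneq \m$ forces $\m^N \not\subseteq P$, so countable prime avoidance applied to $\mathcal{P}$ gives $\m^N \not\subseteq \bigcup_{P \in \mathcal{P}} P$; I can therefore choose $s \in \m^N$ lying outside every prime in $\mathcal{P}$. Then for each $q$ the element $s^q$ lies in $\m^{Nq}$ and avoids every non-maximal associated prime of $R/J_q$, so applying part (b) with $s^q$, $Nq$, and $J_q$ in place of $s$, $n$, and $I$ yields $(J_q : s^q) = J_q^{sat}$; passing to quotients by $J_q$ gives the desired formula. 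The main obstacle --- really the only non-formal ingredient in the whole Fact --- is the uniformity in $q$ in this last step, and countable prime avoidance is precisely the tool that delivers it.
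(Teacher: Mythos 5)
Your proposal is correct and follows essentially the same route as the paper: both containments in (a) via the primary decomposition, (b) as a direct consequence plus the observation $\m^n I^{sat}\subseteq I$, and (c) by countable prime avoidance combined with (b) applied to $s^q$ and $n=Nq$. The only cosmetic difference is in (c), where you apply prime avoidance to $\m^N$ directly to get $s\in\m^N$, whereas the paper picks $s_0\in\m$ outside the countably many non-maximal associated primes and sets $s=s_0^N$; the two devices are interchangeable.
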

\begin{proof}

Let $P_i$ denote the unique associated prime of $R/Q_i$ for $1 \le i \le t-1$. 

a. If $x \in I^{sat}$, then $m^n x \subseteq Q_i$ for some $n\ge 1$, and all $1 \le i \le t-1$.  Since $\m^n \not\subseteq P_i$, and $Q_i$ is $P_i$-primary, it follows that $x \in Q_i$. Conversely, if $x \in Q_1 \cap Q_2 \cap \ldots \cap Q_{t-1}$, it follows that $Q_t x \subseteq I$. Since $Q_t$ is $\m$-primary, we have $\m^n \subseteq Q_t$ for some $n$, and therefore $m^n x \subseteq I$, which proves $x \in I^{sat}$.

b. If $x \in (I:s)$, we have $sx \in Q_i$ for all $1 \le i \le t-1$. Since $s \notin P_i$, it follows that
$x \in Q_i$, and thus we have $(I:s) \subseteq  Q_1 \cap Q_2 \cap \ldots \cap Q_{t-1}= I^{sat}$. If $\displaystyle \m^n H_{\m}^0(\frac{R}{I})=0$, we have $\m^n I^{sat}\subseteq I$,  and therefore $s\in \m^n$ implies $s I^{sat}\subseteq I$, or equivalently $(I:s) \supseteq I^{sat}$.

c. By countable prime avoidance, we can pick $s_0$ outside of all associated primes of all $J_q$, with the exception of the maximal ideal. If $N$ is such that $\displaystyle \m^{Nq}H_{\m}^0(\frac{R}{J_q})=0$, then we let $s=s_0^N$ and observe that the conditions in (b) are satisfied when we use $s^q$ for $s$ and $J_q$ for $I$.
\end{proof}

\section{Main Result}

\begin{theorem}\label{main_theorem}
Assume that $R$ satisfies (LC) and countable prime avoidance.
Let $I$ be an ideal with $\mathrm{dim}(R/I)=c$. Then the generalized Hilbert-Kunz function of $R/I$, $f_{gHK}(R/I)$ can be expressed as a linear combination with integer coefficients that do not depend on $I$ of  Hilbert-Kunz functions of $\m$-primary ideals.
\end{theorem}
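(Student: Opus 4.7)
The plan is to prove this by induction on $c = \dim R/I$. The base case $c=0$ is immediate: $I$ is $\m$-primary, $R/I^{[q]}$ has finite length, $H_{\m}^0(R/I^{[q]}) = R/I^{[q]}$, and so $f_{gHK}^{R/I}(n) = f_{HK}^{R/I}(n)$, trivially a classical Hilbert-Kunz function.

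For the inductive step, I would use Fact~\ref{facts}(c) together with countable prime avoidance to simultaneously select elements $s_1,\ldots,s_c \in \m$ forming a system of parameters on $R/I$ and lying outside every non-maximal associated prime of each ideal in the (countable) family arising in the construction below, with each $s_i$ chosen in a high enough power of $\m$ that the (LC) constant gives $s_i^q J^{sat} \subseteq J$ for every such $J$ and every $q$. For such $J$ and any $s=s_i$, the modular law yields $J^{sat}\cap(J+s^qR) = J + s^q J^{sat} = J$, producing the short exact sequence
$$0 \to H_{\m}^0(R/J) \to R/(J+s^qR) \to R/(J^{sat}+s^qR) \to 0.$$
Since the first term is already of finite length, applying $H_{\m}^0$ keeps the sequence exact (its higher derived functors vanish on finite-length modules), and taking lengths gives
$$l(H_{\m}^0(R/J)) = l(H_{\m}^0(R/(J+s^qR))) - l(H_{\m}^0(R/(J^{sat}+s^qR))).$$

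I would then iterate this identity along two parallel chains: $A_k := (I+s_1R+\cdots+s_kR)^{[q]}$ and $C_k := I^{[q],sat}+s_1^qR+\cdots+s_k^qR$. The key observation (verified using $\m^{Nq} I^{[q],sat}\subseteq I^{[q]}$) is that $A_k^{sat} = C_k^{sat} = (I+s_1R+\cdots+s_kR)^{[q],sat}$; hence the ``$J^{sat}+s^qR$'' terms produced by applying the identity to $A_k$ and to $C_k$ agree at each step and cancel pairwise. After $c$ iterations the telescoping collapses to
$$f_{gHK}^{R/I}(n) = f_{HK}^{R/K}(n) - l\bigl(R/(I^{[q],sat}+(s_1,\ldots,s_c)^{[q]})\bigr),$$
where $K = I+(s_1,\ldots,s_c)$ is $\m$-primary (so its contribution is a classical Hilbert-Kunz function), and $I^{[q],sat}+(s_1,\ldots,s_c)^{[q]}$ is also $\m$-primary (it contains $K^{[q]}$).

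The main obstacle will be expressing the second summand as an integer linear combination of classical Hilbert-Kunz functions. A naive application of the short exact sequence relating $R/K^{[q]}$ and $R/(I^{[q],sat}+(\underline{s})^{[q]})$ only reproduces a tautology (it shows the length equals $f_{HK}^{R/K}(n) - f_{gHK}^{R/I}(n)$), so an independent identity is required. My approach would be to strengthen the inductive statement to apply to all sequences of ideals $(J_q)_q$ satisfying (LC), not only Frobenius powers of a fixed ideal, so that the sequence $(I^{[q],sat}+(s_1,\ldots,s_c)^{[q]})_q$ itself falls within the scope of induction; the signs appearing in the resulting combination are intrinsic to the iterative identity and give integer coefficients ($\pm 1$, from the alternating-sum structure) that do not depend on $I$.
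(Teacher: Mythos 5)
Your reduction to the identity
$$
f_{gHK}^{R/I}(n) \;=\; f_{HK}^{R/K}(n)\;-\;l\bigl(R/(I^{[q],sat}+(s_1^q,\ldots,s_c^q))\bigr)
$$
is correct, and the telescoping via $A_k^{sat}=C_k^{sat}$ is a tidier piece of bookkeeping than the paper's full $2^c$-term expansion (it is consistent with the paper's dimension-one formula $2f_{HK}^{R/I+(s)}-f_{HK}^{R/I+(s^2)}$). But the argument founders exactly at the point you flag as the main obstacle, and the fix you propose does not work. If you strengthen the inductive statement to cover all (LC) sequences $(J_q)_q$ rather than Frobenius powers of a fixed ideal, then the troublesome sequence $C_c=I^{[q],sat}+(s_1^q,\ldots,s_c^q)$ consists of $\m$-primary ideals and therefore lands in the \emph{base case} of that strengthened induction --- and that base case is now precisely the unresolved problem: for an arbitrary (LC) sequence of $\m$-primary ideals there is no reason for $q\mapsto l(R/J_q)$ to be an integer linear combination of classical Hilbert--Kunz functions. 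Your $c=0$ argument ($f_{gHK}=f_{HK}$) uses in an essential way that $J_q=J^{[q]}$ for a single fixed $J$. So the strengthening is circular; the dimension induction never gets to act on $C_c$.

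The missing idea is the paper's multiplication map (equation (\ref{amap}) and the sequences (\ref{ses1})--(\ref{ses4})). Writing $I^{[q],sat}=(I^{[q]}:s_1^q)$, multiplication by $s_1^q$ gives an injection
$$
\frac{R}{(I^{[q]}:s_1^q)+(s_1^q,\ldots,s_c^q)}\;\hookrightarrow\;\frac{R}{I^{[q]}+(s_1^{2q},s_1^qs_2^q,\ldots,s_1^qs_c^q)}\;=\;\frac{R}{(I+(s_1^2,s_1s_2,\ldots,s_1s_c))^{[q]}}
$$
with cokernel $R/(I^{[q]}+(s_1^q))$. This trades the colon ideal for a genuine Frobenius power of a fixed ideal, at the price of a cokernel of dimension $c-1$; applying $H_{\m}^0$ (exactness is preserved because the leftmost term has finite length) expresses your second summand as a difference of \emph{generalized} Hilbert--Kunz functions of ideals of dimension $\le c-1$ containing $I$, and those are handled by the induction on dimension. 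This is also why the paper needs the $*_0,*_1$ bookkeeping of Definition \ref{notation} and the secondary induction on $N(\epsilon_1,\ldots,\epsilon_c)$: each application of the multiplication trick removes one colon but introduces auxiliary ideals $K'$ generated by products of the chosen elements, which must be carried through the recursion.
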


\begin{lemma}\label{isomorphism}
Assume that $(I:s)=(I:s^2)$ (note that this holds in particular when $I^{\mathrm{sat}}=(I:s)$). Then
$$
\frac{(I:s)}{I}\cong \frac{(I:s)+(s)}{I + (s)}
$$
\end{lemma}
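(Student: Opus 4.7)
The plan is to construct a natural map $(I:s)/I \to ((I:s)+(s))/(I+(s))$ and show it is an isomorphism by identifying its kernel. The map is the obvious one: send $x + I$ to $x + (I+(s))$. This is well-defined because $I \subseteq I+(s)$, and it is clearly surjective because every element of $(I:s)+(s)$ is congruent mod $I+(s)$ to an element of $(I:s)$.

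The content of the lemma is the injectivity, and this is where the hypothesis $(I:s)=(I:s^2)$ enters. I would compute the kernel as follows. Suppose $x \in (I:s)$ and $x \in I+(s)$, so write $x = i + sy$ with $i \in I$ and $y \in R$. The aim is to show $x \in I$, for which it suffices to show $sy \in I$. From $x \in (I:s)$ and $i \in I \subseteq (I:s)$, we get $sy = x - i \in (I:s)$, i.e., $s^2 y \in I$, i.e., $y \in (I:s^2)$. Invoking the hypothesis $(I:s^2) = (I:s)$ yields $y \in (I:s)$, hence $sy \in I$, and therefore $x \in I$. This gives the desired isomorphism.

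I expect no real obstacle; the main thing to notice is simply that the stability condition $(I:s) = (I:s^2)$ is precisely what converts the information $s^2 y \in I$ (which you get automatically) into the needed information $sy \in I$ (which you would not get otherwise). For the parenthetical remark, if $I^{\mathrm{sat}} = (I:s)$, then $(I:s^2) \subseteq I^{\mathrm{sat}} = (I:s)$ and the reverse inclusion always holds, so the hypothesis of the lemma is satisfied.
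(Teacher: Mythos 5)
Your proof is correct and follows essentially the same route as the paper: the same natural map, with injectivity established by the identical kernel computation (write $x = i + sy$, deduce $y \in (I:s^2) = (I:s)$, conclude $x \in I$). The only difference is cosmetic — the paper realizes the map as a composition of inclusion and projection and identifies its image, while you verify surjectivity directly.
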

\begin{proof}
Let $f$ denote the composition $\displaystyle \frac{(I:s)}{I} \hookrightarrow \frac{R}{I} \twoheadrightarrow \frac{R}{I+(s)}$ where the first map is inclusion and the second map is projection. Note that the assumption that $I:s=I:s^2$ implies that $f$ is injective. Indeed, if $x \in (I:s) \cap (I +(s))$, we can write $x=i+ ys$ with $i \in I$ and $y \in R$, and we have $ys \in (I:s)$, or $y \in (I:s^2) = (I:s)$, which implies $x \in I$.

Therefore $\displaystyle \frac{(I:s)}{I} \cong \mathrm{im}(f)= \frac{(I:s) + (s)}{I+(s)}$.
\end{proof}

\begin{lemma}\label{length}
Assume that $I^{\mathrm{sat}}=(I:s)$. Then
\begin{equation}\label{length_eq}
l\left( H_{\mathfrak{m}}^0\left(\frac{R}{I}\right)\right) = l\left(H_{\mathfrak{m}}^0\left(\frac{R}{I+(s)}\right)\right)-l\left(H_{\mathfrak{m}}^0\left(\frac{R}{(I:s)+(s)}\right)\right)
\end{equation}

\end{lemma}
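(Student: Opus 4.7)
The plan is to combine Lemma \ref{isomorphism} with a long exact sequence of local cohomology. By Lemma \ref{isomorphism} together with the hypothesis, we have $((I:s)+(s))/(I+(s)) \cong (I:s)/I = I^{\mathrm{sat}}/I = H^0_\m(R/I)$, which is a finitely generated $\m$-torsion module and hence annihilated by some $\m^N$. In particular $((I:s)+(s))/(I+(s))$ sits as an $\m$-torsion submodule of $H^0_\m(R/(I+(s)))$.

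I would then consider the short exact sequence
$$0 \to \frac{(I:s)+(s)}{I+(s)} \to \frac{R}{I+(s)} \to \frac{R}{(I:s)+(s)} \to 0$$
and its long exact sequence of local cohomology, whose initial segment reads
$$0 \to \frac{(I:s)+(s)}{I+(s)} \to H^0_\m\!\left(\frac{R}{I+(s)}\right) \to H^0_\m\!\left(\frac{R}{(I:s)+(s)}\right) \to H^1_\m(\cdots),$$
where the leftmost term appears as its own $H^0_\m$ by the previous paragraph. The goal is to promote this to a short exact sequence by showing that the middle map is surjective; equation \eqref{length_eq} will then follow by additivity of length combined with the isomorphism of Lemma \ref{isomorphism}.

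For surjectivity, take a class $\bar{x}$ in $H^0_\m(R/((I:s)+(s)))$ represented by $x \in R$ with $\m^n x \subseteq (I:s)+(s)$; I would argue that $x$ itself already defines a lift in $H^0_\m(R/(I+(s)))$ by showing $\m^{n+N} x \subseteq I+(s)$. Indeed, any $m_1 \in \m^n$ gives $m_1 x = u + vs$ with $u \in (I:s)$, and multiplying by any $m_2 \in \m^N$ yields $m_2 m_1 x = m_2 u + m_2 v s \in I + (s)$, because $m_2 u \in \m^N(I:s) \subseteq I$.

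The only real obstacle is this surjectivity step, and it is precisely where the hypothesis $(I:s) = I^{\mathrm{sat}}$ enters essentially: without it, $(I:s)/I$ need not be $\m$-torsion and the integer $N$ above would not exist. Everything else is routine bookkeeping with short exact sequences and lengths.
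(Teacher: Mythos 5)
Your proposal is correct and takes essentially the same approach as the paper: both apply $H_{\m}^0$ to the short exact sequence $0 \to \frac{(I:s)+(s)}{I+(s)} \to \frac{R}{I+(s)} \to \frac{R}{(I:s)+(s)} \to 0$ and conclude by Lemma~\ref{isomorphism} together with additivity of length. The only difference is in one step: where you verify surjectivity of the induced map on $H_{\m}^0$ by an explicit element computation using the annihilator $\m^{N}$ of $I^{\mathrm{sat}}/I$, the paper obtains the same exactness instantly from the vanishing of $H_{\m}^1$ of the zero-dimensional module $(I:s)/I$ in the long exact sequence.
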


\begin{proof}
Consider the short exact sequence 
$$
0 \rightarrow \frac{(I:s)}{I} \stackrel{f}{\longrightarrow} \frac{R}{I+(s)} \longrightarrow \frac{R}{(I:s)+(s)} \rightarrow 0
$$
given by the proof of  Lemma ~\ref{isomorphism}.

Since $\displaystyle \frac{(I:s)}{I}=H_{\mathfrak{m}}^0(\frac{R}{I})$, it is a zero-dimensional module, and therefore $\displaystyle H_{\mathfrak{m}}^1(\frac{(I:s)}{I})=0$. This ensure that exactness is preserved when we apply $H_{\mathfrak{m}}^0 ( {\underline{\   \ } } ) $ to the above short exact sequence, and the conclusion follows from the additivity of length on short exact sequences.
\end{proof}

Before embarking on the proof of the main theorem, we illustrate it for low dimensional cases, where we give explicit formulas for the linear combinations mentioned in the statement of the main theorem.

\begin{prop}\label{dim1}
Assume that $R$ satisfies (LC) and countable prime avoidance. Let $I \subset R$ be an ideal with $\mathrm{dim}(R/I)=1$. Then there exists $s \in \m$ such that $I+(s)$, $I+(s^2)$ are $\m$-primary, and
$$f_{gHK}^{R/I}=2 f_{HK}^{R/I+(s)}-f_{HK}^{R/I+( s^2)}.$$
\end{prop}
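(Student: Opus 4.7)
The plan is to apply Lemma~\ref{length} to each Frobenius power $I^{[q]}$ with the element $s^q$, for one carefully chosen $s\in\m$.  By Fact~\ref{facts}(c), there is an $s\in\m$ such that $(I^{[q]}:s^q) = (I^{[q]})^{\mathrm{sat}}$ for every $q=p^n$; the construction places $s$ outside every non-maximal associated prime of every $R/I^{[q]}$, and in particular outside every minimal prime of $R/I$.  Since $\dim(R/I)=1$, both $I+(s)$ and $I+(s^2)$ then have dimension zero and are $\m$-primary.  Consequently the Frobenius powers $(I+(s))^{[q]}=I^{[q]}+(s^q)$ and $(I+(s^2))^{[q]}=I^{[q]}+(s^{2q})$ are $\m$-primary for every $q$, with colengths $f_{HK}^{R/I+(s)}(n)$ and $f_{HK}^{R/I+(s^2)}(n)$ respectively.

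Now fix $q$ and set $J:=I^{[q]}$, $t:=s^q$ to shorten notation.  The hypothesis $(J:t)=J^{\mathrm{sat}}$ holds, so Lemma~\ref{length} gives
\begin{equation*}
 f_{gHK}^{R/I}(n) = l\!\left(H_\m^0(R/(J+(t)))\right) - l\!\left(H_\m^0(R/((J:t)+(t)))\right) = f_{HK}^{R/I+(s)}(n) - l\!\left(R/((J:t)+(t))\right),
\end{equation*}
where the second equality uses that $J+(t)$ is $\m$-primary and therefore so is any ideal containing it.  The proposition thus reduces to showing that $l(R/((J:t)+(t)))$ equals $f_{HK}^{R/I+(s^2)}(n) - f_{HK}^{R/I+(s)}(n)$.

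The crux is the ideal identity $(J:t)+(t) = (J+(t^2):t)$: the inclusion $\subseteq$ follows because $t\cdot(J:t)\subseteq J$, and for $\supseteq$, if $tx = j + t^2 y$ with $j\in J$ then $t(x-ty)\in J$, so $x = (x-ty)+ty \in (J:t)+(t)$.  Multiplication by $t$ on $R/(J+(t^2))$ has kernel $(J+(t^2):t)/(J+(t^2))$ and image $(J+(t))/(J+(t^2))$, so by the first isomorphism theorem
\begin{equation*}
 l(R/((J:t)+(t))) = l(R/(J+(t^2):t)) = l((J+(t))/(J+(t^2))) = f_{HK}^{R/I+(s^2)}(n) - f_{HK}^{R/I+(s)}(n).
\end{equation*}
Substituting into the previous display produces $f_{gHK}^{R/I}(n) = 2f_{HK}^{R/I+(s)}(n) - f_{HK}^{R/I+(s^2)}(n)$, as claimed.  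The only delicate point in this argument is bookkeeping which ideals are $\m$-primary so that $H_\m^0$ can be dropped in favor of the full quotient; beyond that, the single nontrivial algebraic step is the colon identity above, and I do not anticipate a serious obstacle.
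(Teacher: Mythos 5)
Your proof is correct and takes essentially the same route as the paper: the same choice of $s$ via Fact~\ref{facts}(c), the same application of Lemma~\ref{length}, and the same identification of $l\left(R/((I^{[q]}:s^q)+(s^q))\right)$ with $f_{HK}^{R/I+(s^2)}-f_{HK}^{R/I+(s)}$. Your colon identity $(J:t)+(t)=(J+(t^2):t)$ is exactly the injectivity of the multiplication-by-$s^q$ map $R/((I^{[q]}:s^q)+(s^q))\rightarrow R/(I^{[q]}+(s^{2q}))$ that the paper uses, with the cokernel computation replaced by the first isomorphism theorem, so the two arguments coincide.
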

\begin{proof}
  Countable prime avoidance allows us to pick $s \in \m$ which is  not in any associated primes of any Frobenius power $I^{[q]}$ except for $\m$. According to (~\ref{facts}), we can pick such an $s$ which satisfies $\displaystyle H_{\m}^0(\frac{R}{I^{[q]}})=\frac{(I^{[q]}:s^q)}{I^{[q]}}$. Note that this choice of $s$ also implies that $I+(s)$ and $I+(s^2)$ are $\m$-primary. Lemma ~\ref{length} now gives 
\begin{equation}\label{lincomb}
l\left(H_{\m}^0\left( \frac{R}{I^{[q]}}\right)\right)= l\left(\frac{R}{I^{[q]}+(s^q)}\right)- l\left(\frac{R}{(I^{[q]}:s^q) +(s^q)}\right)
\end{equation} 
The first term on the right hand side of equation (\ref{lincomb}) is a $f_{HK}^{R/I+(s)}$. In order to deal with the second term, consider the map 
\begin{equation}\label{amap}
\frac{R}{(I^{[q]}:s^q)+(s^q)} \rightarrow \frac{R}{I^{[q]}+(s^{2q})}
\end{equation}
given by multiplication  by $s^q$.
It is easy to check that this map is injective, and the cokernel is $\displaystyle \frac{R}{I^{[q]}+(s^q)}$. The conclusion now follows  using additivity of length on short exact sequences together with (\ref{lincomb}).
\end{proof}

\begin{prop}\label{dimension2}
Assume $R$ satisfies (LC) and countable prime avoidance. Let $I\subset R$ be an ideal with $\mathrm{dim}(R/I)=2$.  Then there exist $s, t \in \m$ such that
$$
f_{gHK}^{R/I}= 2f_{gHK}^{R/I+( s)}-2f_{gHK}^{R/I+( s^2, st)} + f_{gHK}^{R/I+( s^2, st^2)}
$$
where the terms on the right hand side are generalized Hilbert-Kunz functions of one-dimensional ideals.

\end{prop}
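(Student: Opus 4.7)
The plan is to iterate the approach of Proposition \ref{dim1}: apply Lemma \ref{length} twice and then recognize the resulting finite-length terms through multiplication-by-$s^q$ short exact sequences of the same form as in the proof of Proposition \ref{dim1}.

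First I pick $s \in \m$ via countable prime avoidance so that Fact \ref{facts}(c) yields $(I^{[q]}:s^q) = (I^{[q]})^{sat}$ for every $q$. Lemma \ref{length} applied to $I^{[q]}$ with $s^q$ gives
$$
f_{gHK}^{R/I}(n) = f_{gHK}^{R/I+(s)}(n) - l\!\left(H_{\m}^0(R/K_q)\right),
$$
where $K_q := (I^{[q]}:s^q) + (s^q)$. Exactly as in Proposition \ref{dim1}, one verifies $K_q = (I^{[q]}+(s^{2q})):s^q$, so multiplication by $s^q$ embeds $R/K_q$ into $R/(I+(s^2))^{[q]}$, and hence $H_{\m}^0(R/K_q) \hookrightarrow H_{\m}^0(R/(I+(s^2))^{[q]})$. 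The (LC) hypothesis applied to the ideal $I+(s^2)$ produces an integer $N$ independent of $q$ with $\m^{Nq} H_{\m}^0(R/(I+(s^2))^{[q]}) = 0$; this annihilation is inherited by $H_{\m}^0(R/K_q)$, so together with countable prime avoidance I can pick $t \in \m$ satisfying the hypotheses of Fact \ref{facts}(b) uniformly for every $K_q$. Lemma \ref{length} then applies a second time, now to $K_q$ with $t^q$:
$$
l\!\left(H_{\m}^0(R/K_q)\right) = l\!\left(H_{\m}^0(R/(K_q+(t^q)))\right) - l\!\left(H_{\m}^0(R/((K_q:t^q)+(t^q)))\right).
$$

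The three ideals $K_q+(t^q)$, $K_q+(t^{2q})$, and $(K_q:t^q)+(t^q)$ are $\m$-primary, so these $H_{\m}^0$ terms are just lengths of finite length modules. To identify them with $f_{gHK}$ of the target one-dimensional ideals, I use the two multiplication-by-$s^q$ short exact sequences
$$
0 \to R/(K_q+(t^q)) \stackrel{\cdot s^q}{\lar} R/(I+(s^2,st))^{[q]} \lar R/(I+(s))^{[q]} \to 0,
$$
$$
0 \to R/(K_q+(t^{2q})) \stackrel{\cdot s^q}{\lar} R/(I+(s^2,st^2))^{[q]} \lar R/(I+(s))^{[q]} \to 0,
$$
each verified by the colon-ideal computation from Proposition \ref{dim1}. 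Because the leftmost terms have dimension zero, applying $H_{\m}^0$ preserves exactness and gives
$$
l(R/(K_q+(t^q))) = f_{gHK}^{R/I+(s^2,st)}(n) - f_{gHK}^{R/I+(s)}(n),
$$
together with the analogous identity for $t^{2q}$. Finally, the standard multiplication-by-$t^q$ short exact sequence among the three $\m$-primary quotients expresses $l(R/((K_q:t^q)+(t^q)))$ as $l(R/(K_q+(t^{2q}))) - l(R/(K_q+(t^q)))$. Substituting back collapses to the desired formula.

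The principal obstacle is justifying the second application of Lemma \ref{length}: the sequence $K_q$ is not a sequence of Frobenius powers, so the (LC) hypothesis is not directly available for it. The embedding $H_{\m}^0(R/K_q) \hookrightarrow H_{\m}^0(R/(I+(s^2))^{[q]})$ coming from the identity $K_q = (I^{[q]}+(s^{2q})):s^q$ is exactly what lets me transfer the uniform $\m^{Nq}$-annihilation from (LC) on $R$ down to $K_q$, and this is what makes Fact \ref{facts}(b) available for the second step. Everything else is ideal-colon bookkeeping and exact-sequence manipulations, carried out as in the proof of Proposition \ref{dim1}.
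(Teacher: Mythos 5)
Your proof is correct and follows essentially the same route as the paper's: choose $s$ via countable prime avoidance and Fact \ref{facts}, apply Lemma \ref{length}, transfer (LC) to the family $K_q=(I^{[q]}:s^q)+(s^q)$ through the multiplication-by-$s^q$ embedding into $R/(I+(s^2))^{[q]}$, choose $t$, apply Lemma \ref{length} again, and identify all remaining terms via the same multiplication-by-$s^q$ and $t^q$ short exact sequences the paper uses. The only cosmetic difference is that you make the colon identity $K_q=(I^{[q]}+(s^{2q})):s^q$ and the final arithmetic explicit, which the paper leaves to the reader.
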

\begin{proof}
Pick $s \in \m$ as in the proof of Proposition ~\ref{dim1}, such that $\displaystyle H_{\m}^0(R/I^{[q]})=\frac{ (I^{[q]}:s^q)}{I^{[q]}}$. Apply  Lemma~\ref{length}  to $I^{[q]}$. Note that the first term in equation (\ref{length_eq}) is $f_{gHK}^{R/I+(s)}$,  a generalized Hilbert-Kunz function of a one-dimensional ideal. Now we  calculate the second term. 
Observe that the family of ideals $(I^{[q]}:s^q)+(s^q)$ satisfy (LC), since applying $H_{\m}^0( \underline{\ \ })$ to the map (\ref{amap}) shows that  $H_{\m}^0(R/(I^{[q]}:s^q)+(s^q))$ is a submodule of $H_{\m}^0(R/I^{[q]}+(s^{2q}))$, and the latter local cohomology module is annihilated by $m^{Nq}$ for some $N$ that does not depend on $q$ from the assumption that the ring satisfies (LC). Thus, we can pick $t$ such that 
$$
H_{\m}^0\left(\frac{R}{(I^{[q]}:s^q)+(s^q)}\right) = \frac{(I^{[q]}:s^q+(s^q)):t^q}{(I^{[q]}:s^q) +(s^q)},
$$
and Lemma~\ref{length} gives
\begin{equation}\label{second_term}
l\left(H_{\m}^0\left(\frac{R}{(I^{[q]}:s^q)+(s^q)}\right)\right)= l\left(\frac{R}{(I^{[q]}:s^q)+(s^q, t^q)}\right)- l\left(\frac{R}{(I^{[q]}:s^q+(s^q)):t^q +( t^q)}\right)
\end{equation}
(note that the two quotients on the right hand side are zero dimensional).
In order to express the two terms on the right hand side of equation (\ref{second_term}) in terms of Hilbert-Kunz multiplicities of zero-dimensional ideals, we consider the following two short exact sequences, where the leftmost maps are multiplication by $s^q$ and $t^q$ respectively:
\begin{equation}\label{ses1}
0 \rightarrow \frac{R}{(I^{[q]}:s^q)+( s^q, t^q)} \rightarrow \frac{R}{I^{[q]}+( s^{2q}, s^qt^q)}\rightarrow \frac{R}{I^{[q]}+(s^q)}\rightarrow 0
\end{equation}
\begin{equation}\label{ses2}
0 \rightarrow \frac{R}{(I^{[q]}:s^q, s^q):t^q+( t^q)} \rightarrow \frac{R}{(I^{[q]}:s^q)+(s^q, t^{2q})}\rightarrow \frac{R}{(I^{[q]}:s^q)+( s^q, t^q)}\rightarrow 0
\end{equation}

Since the leftmost terms in (\ref{ses1}) and (\ref{ses2}) are zero-dimensional, exactness is preserved when applying $H_{\m}^0(\underline{\ \ })$. 
The two rightmost terms of the short exact sequence (\ref{ses1}) become $f_{gHK}^{R/I+(s^2, st)}$ and $f_{gHK}^{R/I+(s)}$ respectively, which are generalized Hilbert-Kunz functions of one-dimensional ideals.

The two rightmost terms of the short exact sequence (\ref{ses2}) are zero dimensional, and in order to evaluate their lengths we use the short exact sequences
\begin{equation}\label{ses3}
0 \rightarrow \frac{R}{(I^{[q]}:s^q)+(s^q, t^{2q})} \rightarrow \frac{R}{I^{[q]}+(s^{2q}, s^q t^{2q})}\rightarrow \frac{R}{I^{[q]}+(s^q)}\rightarrow 0,
\end{equation}
and
\begin{equation}\label{ses4}
0\rightarrow \frac{R}{(I^{[q]}:s^q)+(s^q, t^q)} \rightarrow \frac{R}{I^{[q]}+(s^{2q}, s^qt^q)}\rightarrow \frac{R}{I^{[q]}+( s^q)}\rightarrow 0
\end{equation}
Exactness is preserved in (\ref{ses3}) and (\ref{ses4}) when applying $H_{\m}^0( \underline{\ \ })$, and now the two right-most terms become $f_{gHK}^{R/I+(s^2, st^2)}$ and $f_{gHK}^{R/I+(s)}$ in (\ref{ses3}), and $f_{gHK}^{R/I+(s^2, st)}$ and $f_{gHK}^{R/I+(s)}$ in (\ref{ses4}). The conclusion follows from the additivity of length on short exact sequences.

\end{proof}
The proof of the main theorem will use the following notation:
\begin{definition}\label{notation}
Let $J, K\subset R$ and $x\in R$. We define $J*_0 xK:= J+xK$, and $J*_1 x:= (J: x) + (x)$. We will frequently write $J:_{\epsilon} xK$ with $\epsilon\in \{0, 1\}$. It will be understood that if $\epsilon=1$ then $K=R$ and $J*_1 xR$ just means $J*_1x$. When we write $J*_{\epsilon_1}x_1K_1*_{\epsilon_2}\cdots *_{\epsilon_s}x_sK_s$ it will be understood that the order of operation is from left to right.

We will make the assumption that $K$ and $(x)$ are not contained in any associated prime ideal of $J$ except for $\m$, and $\mathrm{dim}(R/J)>0$. Then we have $\mathrm{dim}(R/J*_{\epsilon} xK)=\mathrm{dim}(R/J)-1$. When we write $J*_{\epsilon_1}x_1K_1*_{\epsilon_2}\cdots *_{\epsilon_s}x_sK_s$, we will make the assumption that for all $i \le s$, $x_i$ and $K_i$ are not contained in any associated prime ideal of $J*_{\epsilon_1}x_1K_1*_{\epsilon_2}\cdots *_{\epsilon_{i-1}}x_{i-1}K_{i-1}$.
\end{definition}

\begin{lemma}\label{recursive}

For all $c>0$ and all $(\epsilon_1, \ldots, \epsilon_c) \in \{0, 1\}^c$, there exist integers $d_{(\epsilon_1, \ldots, \epsilon_c)}$ such that if $J\subset R$ is an ideal with $\mathrm{dim}(R/J)=c$, the following hold:

{\rm a.} We can pick elements $x_1, \ldots, x_c$ such that 
$$
l\left(H_{\m}^0\left(\frac{R}{J}\right)\right) = \sum d_{\epsilon_1, \ldots, \epsilon_c} l\left(\frac{R}{J*_{\epsilon_1}x_1*_{\epsilon_2}  \ldots *_{\epsilon_c}x_c}\right)
$$
where the summation is over all $(\epsilon_1, \ldots, \epsilon_x) \in \{0, 1\}^c$ and all the $J*_{\epsilon_1}x_1*_{\epsilon_2} \ldots *_{\epsilon_c} x_c$ are $\m$-primary ideals.

Assume moreover that $R$ satisfies countable prime avoidance and has the (LC) property. Then:

{\rm b.} We can pick $y_1, \ldots, y_c$ independent of $q$, such that 
\begin{equation}\label{linearcomb}
l\left(H_{\m}^0\left(\frac{R}{J^{[q]}}\right)\right)=\sum d_{\epsilon_1, \ldots, \epsilon_c} l\left(\frac{R}{J^{[q]}*_{\epsilon_1}y_1^q*_{\epsilon_2}  \ldots *_{\epsilon_c}y_c^q}\right) \ \forall q=p^e
\end{equation}
where the summation is over all $(\epsilon_1, \ldots, \epsilon_c) \in \{0, 1\}^c$ and all the $J^{[q]}*_{\epsilon_1}y_1^q*_{\epsilon_2} \ldots *_{\epsilon_c} y_c^q$ are $\m$-primary ideals.

{\rm c. } For every $s<c$,  $\epsilon'_1, \ldots, \epsilon'_s\in \{0, 1\}$,  $y_1, \ldots, y_s \in R$ and $K_1, \ldots, K_s$ ideals in $R$ satisfying the assumption in (\ref{notation}), we can pick $z_{s+1}, \ldots, z_c$ such that
$$
l\left(H_{\m}^0\left(\frac{R}{J^{[q]}*_{\epsilon_1'}*y_1^qK_1^{[q]}*_{\epsilon_2'} \ldots *_{\epsilon_s'}y_s^qK_s^{[q]}}\right)\right) =
$$
$$
 \sum d_{\epsilon_{s+1}, \ldots, \epsilon_c} l\left(\frac{R}{J*_{\epsilon'_1}y_1^qK_1^{[q]}*_{\epsilon'_2}  \ldots *_{\epsilon'_s}y_s^qK_s^{[q]}*_{\epsilon_{s+1}}z_{s+1}^q *_{\epsilon_{s+2}} \ldots *_{\epsilon_c} z_c^q}\right)
$$
where the summation is over all the choices of $(\epsilon_{s+1}, \ldots, \epsilon_c) \in \{0, 1\}^{c-s}$ and all the ideals appearing in the quotients on the right hand side are $\m$-primary.
 

\end{lemma}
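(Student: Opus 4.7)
The overall plan is to prove all three parts by induction, with Lemma~\ref{length} serving as the recursive engine. Whenever $z$ is chosen so that $(K:z) = K^{\mathrm{sat}}$, Lemma~\ref{length} yields the identity
$$l(H_\m^0(R/K)) = l(H_\m^0(R/K *_0 z)) - l(H_\m^0(R/K *_1 z)),$$
and both quotients on the right have dimension one lower than $R/K$ by Definition~\ref{notation}. Iterating this identity $c$ (respectively $c-s$) times produces a signed sum over $\{0,1\}^c$ (respectively $\{0,1\}^{c-s}$) of lengths of $\m$-primary quotients, with universal coefficients $d_{\epsilon_1,\ldots,\epsilon_c} = (-1)^{\epsilon_1+\cdots+\epsilon_c}$ that are independent of $J$ and of the chosen elements, as the statement requires.

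For part (a), I would induct on $c$. At stage $i$ I have at most $2^{i-1}$ intermediate ideals, and I need $x_i \in \m$ only to avoid the non-maximal associated primes of each, a finite union; ordinary prime avoidance therefore suffices and neither the (LC) property nor countable prime avoidance is needed. A sufficiently high power of $x_i$ meets the hypothesis of Fact~\ref{facts}(b), making Lemma~\ref{length} applicable at every node of the tree, and the dimension drop in Definition~\ref{notation} guarantees that all $c$-step chains terminate at $\m$-primary ideals. For parts (b) and (c), I would prove (c) by induction on $c-s$ and recover (b) as the case $s=0$. The inductive step parallels (a) but must be carried out uniformly in $q$: given the current family $\{F_q\}$, I pick $z_{s+1}$ as a high power of an element avoiding the countable union over $q$ of non-maximal associated primes of $F_q$, which is possible by countable prime avoidance, so that Fact~\ref{facts}(c) supplies the hypothesis of Lemma~\ref{length} simultaneously for all $q$. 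The inductive hypothesis then applies to each of the two resulting families $\{F_q *_0 z_{s+1}^q\}$ and $\{F_q *_1 z_{s+1}^q\}$, and the signs assemble into the advertised universal coefficients.

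The main obstacle is verifying that every intermediate family $\{F_q\}$ produced by the recursion satisfies the (LC) condition needed to invoke Fact~\ref{facts}(c). I would isolate this as a separate claim: for any chain of the form $F_q = J^{[q]} *_{\epsilon_1} y_1^q K_1^{[q]} *_{\epsilon_2} \cdots *_{\epsilon_i} y_i^q K_i^{[q]}$ meeting the hypotheses of Definition~\ref{notation}, there exists an ideal $L \subset R$ independent of $q$ together with an injection $R/F_q \hookrightarrow R/L^{[q]}$, which induces an embedding of $H_\m^0(R/F_q)$ into $H_\m^0(R/L^{[q]})$. Since $R$ satisfies (LC) and $L^{[q]}$ is the Frobenius power of a fixed ideal, $\m^{Nq}$ annihilates $H_\m^0(R/L^{[q]})$ for a uniform $N$, and hence also $H_\m^0(R/F_q)$. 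I would prove this claim by induction on $i$, using multiplication by $y_j^q$ to convert each $*_1$ operation in the chain into an ordinary sum at the cost of doubling the exponent of $y_j$, which is exactly the injectivity argument used for the map~(\ref{amap}) in Proposition~\ref{dim1} and reused in Proposition~\ref{dimension2}. Once all $*_1$'s have been eliminated this way, the resulting ideal is the Frobenius power of a single fixed ideal $L$ built from $J$, the $y_j$'s, and the $K_j$'s, and the (LC) hypothesis on $R$ gives the required uniform annihilator bound.
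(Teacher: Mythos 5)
Your proposal is correct and follows essentially the same route as the paper: part (a) via ordinary prime avoidance and Lemma~\ref{length} at each node of the binary tree of intermediate ideals, and parts (b)--(c) via countable prime avoidance plus the key claim that every intermediate family satisfies (LC), which both you and the paper establish by the multiplication-by-$y_j^q$ injections that convert colon operations into sums and reduce to Frobenius powers of a fixed ideal. The only cosmetic differences are that you make the coefficients $(-1)^{\epsilon_1+\cdots+\epsilon_c}$ explicit and compose the injections into a single embedding $R/F_q \hookrightarrow R/L^{[q]}$, whereas the paper eliminates the colons one at a time by induction on the largest index $j$ with $\epsilon_j=1$.
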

\begin{proof}

(a). We pick $x_i$ recursively to be outside of all associated primes of $J*_{\epsilon_1}x_1 *_{\epsilon_2}*\cdots *_{\epsilon_{i-1}}x_{i-1}$ except for the maximal ideal, for all $\epsilon_1, \ldots, \epsilon_{i-1} \in \{0, 1\}$. Note that the ideals
$J':=J*_{\epsilon_1}x_1 *_{\epsilon_2}*\cdots*_{\epsilon_{i-1}}x_{i-1}$ constructed this way have dimension $c-i+1$, and we may choose $x_i$ such that
 $\displaystyle H_{\m}^0\left(\frac{R}{J'}\right)= \frac{J':x_i}{J'}$.  Lemma~\ref{length} shows that the length of this local cohomology module can be computed as 
$$
\left(H_{\m}^0\left(\frac{R}{J'*_0x_i}\right)\right) - l\left(H_{\m}^0\left(\frac{R}{J'*_1x_i}\right)\right)
$$
The claim now follows by induction on $c$.

(b). and (c). Induct on $c$. Note that (LC) for the ideal $J$  gives that we can pick $x_1 = y_1^q$. Then the ideal $J^{[q]}*_0x_1= (J, y_1)^{[q]}$ is a Frobenius power of an ideal of dimension $c-1$, and by the inductive hypothesis we can pick $x_i = y_i^q$  ($i=2, \ldots, c$)  that satisfy the desired conclusion for $J':=J^{[q]}*_0x_1$.
 As in the proof of Proposition (\ref{dimension2}), we note that the family of ideals  $J_q'':=J^{[q]}*_1y_1^{q_1}$ satisfy (LC), since $H_{\m}^0(R/J_q'')$ is a submodule of $H_{\m}^0(R/J^{[q]}+(y_1^{2q}))$. Therefore we can pick $x_2=y_2^q$ such that
$$
l\left(H_{\m}^0\left(\frac{R}{J_q''}\right)\right) = l\left(H_{\m}^0\left(\frac{R}{J''_q*_0x_2^q}\right)\right) - l\left(H_{\m}^0\left( \frac{R}{J''_q*_1x_2^q}\right)\right)
$$
(via Lemma (\ref{length})).

We claim that we can pick $y_i$ recursively such that for all $i<c$ and all $\epsilon_1, \ldots, \epsilon_i \in \{0, 1\}$ we have
$$
H_{\m}^0\left(\frac{R}{J_{q, (\epsilon_1, \ldots, \epsilon_i)}}\right)= \frac{J_{q, (\epsilon_1, \ldots, \epsilon_i)}:y_{i+1}^q}{J_{q, (\epsilon_1, \ldots, \epsilon_i)}},
$$
where $J_{q, (\epsilon_1, \ldots, \epsilon_i)}:= J^{[q]}*_{\epsilon_1}y_1^q *_{\epsilon_2} \cdots *_{\epsilon_i}y_i^q$  

 Assume that all $y_j$ for $j\le i$ are picked. In order to be able to pick $y_{i+1}$, we need to show that the family of ideals $J_{q, (\epsilon_1, \ldots, \epsilon_i)}$  satisfy (LC) for every $\epsilon_1, \ldots, \epsilon_i \in \{0, 1\}$.
We claim more generally that for any elements $f_1, \ldots, f_N$, ideals $K_1, \ldots, K_N$, and $\epsilon_1, \ldots, \epsilon_N \in \{0, 1\}$, the family of  ideals $J_{q, (\epsilon_1, \ldots, \epsilon_N)}:=J^{[q]}*_{\epsilon_1}f_1^qK_1^{[q]}*_{\epsilon_2}f_2^qK_2^{[q]}*_{\epsilon_3} \ldots *_{\epsilon_N}f_N^qK_N^{{q}}$ satisfy (LC). This will also justify the statement in (c).

Let $j$  be the largest index with $\epsilon_j=1$ (we consider $j$ to be zero if all $\epsilon_j=0$). We prove the claim by induction on $j$. The case $j=0$ is clear 
from the assumption that $R$ satisfies (LC), since the ideals under consideration are Frobenius powers of a fixed ideal. Let $J'_q:= J^{[q]}*_{\epsilon_1} f_1^qK_1^{[q]}*_{\epsilon_2} \ldots *_{\epsilon_{j-1}}f_{j-1}^qK_{j-1}^{[q]}$, so that  $$J_{q, (\epsilon_1, \ldots, \epsilon_i)}= J'_q :f_j^q + (f_j^q)+ f_{j+1}^qK_{j+1}^{[q]}+\ldots +  f_N^qK_N^{[q]}.$$

 Multiplication by $f_j^q$ gives an injection
$$
\frac{R}{J_{q, (\epsilon_1, \ldots, \epsilon_i)}   } \hookrightarrow \frac{R}{J'_q +(f_j^{2q})+ f_j^qf_{j+1}^qK_{j+1}^{[q]}+\ldots + f_j^qf_N^qK_N^{[q]}}. 
$$
The denominator of the second quotient can be described as $$J^{[q]}*_{\epsilon'_1}f_1'^q K_1^{[q]}*_{\epsilon_2'}f_2'^qK_2^{[q]}* \ldots * {\epsilon'_N}f_N'^qK_N^{[q]},$$ where $\epsilon_i'=\epsilon_i$ for all $i\ne j$, $\epsilon_j'=0$,  $f_i'=f_i$ for $i<j$, and $f_i'=f_jf_i$ for $i \ge j$. The inductive hypothesis tells us that this family of ideals satisfy (LC), and therefore, since $H_{\mathfrak{m}}^0(\underline{ \ })$ is left exact,  the family of ideals $J_{q, (\epsilon_1, \ldots, \epsilon_N)}$ do as well.

\end{proof}

Now we are ready to give the proof of the main theorem.
\begin{proof}
Let $\mathrm{dim}(R/J)=c$, and let $y_1, \ldots, y_c$ be such that equation (\ref{linearcomb}) from Lemma (\ref{recursive}) holds. We claim that for all $(\epsilon_1, \ldots, \epsilon_c) \in \{0, 1\}^c$, the length 
$$
l\left(\frac{R}{J^{[q]}*_{\epsilon_1}y^q_1*_{\epsilon_2} y^q_2*_{\epsilon_3} \ldots *_{\epsilon_c}y^q_c}\right)
$$
can be expressed as a linear combination (with integer coefficients that do not depended on $J$ and $q$) of generalized Hilbert-Kunz functions of ideals of  dimension $\le c-1$.
Due to Lemma (\ref{recursive}), this will show that $f_{gHK}^{R/J}$ is a linear combination of generalized Hilbert-Kunz functions of ideals of dimension $\le c-1$. Repeated applications of this fact will  then allow us to reduce to Hilbert-Kunz functions of zero-dimensional ideals, as desired.

More generally, we claim that if $y_1, \ldots, y_c$, $K_1, \ldots, K_c$ are such that the assumption in Definition (\ref{notation}) is satisfied, then the length of 
$$
l\left(\frac{R}{J^{[q]}*_{\epsilon_1}y_1^qK_1^{[q]}*_{\epsilon_2} y^q_2K_2^{[q]} *_{\epsilon_3} \ldots *_{\epsilon_c}y^q_cK_c^{[q]}}\right)
$$
can be expressed as a linear combination (with integer coefficients independent of $q$) of generalized Hilbert-Kunz functions of ideals of  dimension $\le c-1$.

 We induct on $N(\epsilon_1, \ldots, \epsilon_c):=\sum_{i=1}^c \epsilon_i 2^{c-i}$. The base case is $\epsilon_1 = \ldots = \epsilon_c =0$, in which case the function under consideration is itself a Hilbert-Kunz function of a zero-dimensional ideal.
For the inductive step, let $j$ be the largest index for which $\epsilon_j=1$. Let  $J'_q= J^{[q]}*_{\epsilon_1}y_1^qK_1^{[q]} *_{\epsilon_2} \ldots *_{\epsilon_{j-1}}y_{j-1}^qK_{j-1}^{[q]}$. Note that
$\mathcal{J}:= J^{[q]}*_{\epsilon_1}y_1^qK_1^{[q]}*_{\epsilon_2} y^q_2K_2^{[q]}*_{\epsilon_3} \ldots *_{\epsilon_c}y^q_cK_c^{[q]}$ can be written as $J'_q:y_j^q + y_j^q+ y_{j+1}^qK_{j+1}^{[q]} + \ldots + y_c^q K_c^{[q]}$.
Multiplication by $y_j^q$ gives an injection 
\begin{equation}\label{injection}
\frac{R}{\mathcal{J}} \hookrightarrow \frac{R}{J'_q+y_j^{2q}+y_j^q(y_{j+1}^qK_{j+1}^{[q]}+ \cdots + y_c^q K_c^{[q]})}
\end{equation}
The denominator of the second quotient above can be written as $J'_q*_0y_j^qK'^{[q]}$ where $K'=(y_j^q)+y_{j+1}^qK_{j+1}^{[q]}+ \ldots + y_c^qK_c^{[q]}$. The cokernel of the map  (\ref{injection}) is 
$\displaystyle \frac{R}{J'_q+y_j^q}$. Since $\displaystyle \frac{R}{\mathcal{J}}$ is zero-dimensional, applying $H_{\m}^0$ to the resulting short exact sequence preserves exactness. Therefore, the length of $\displaystyle H_{\m }^0(\frac{R}{\mathcal{J}})$ is
\begin{equation}\label{conclusion}
l\left(H_{\m}^0\left( \frac{R}{J'_q *_0 y_j^q K'^{[q]}}\right)\right) - l\left(H_{\m}^0\left(\frac{R}{J'_q*_0y_j^q}\right)\right)
\end{equation}
Now apply the last part of  Lemma ~\ref{recursive} to find $z_{j+1}, \ldots, z_c$ such that the two terms in (\ref{conclusion}) are linear combinations of 
$\displaystyle l \left(\frac{R}{J'_q*_0y_j^qK'^{[q]}*_{\epsilon'_{j+1}}z_{j+1}^c *_{\epsilon'_{j+2}}\ldots *_{\epsilon'_c}z_c^q}\right)$, and $\displaystyle l  \left(\frac{R}{J'_q*_0y_j^q*_{\epsilon'_{j+1}}z_{j+1}^q *_{\epsilon'_{j+2}}\ldots *_{\epsilon'_c}z_c^q}\right)$ respectively, where $(\epsilon'_{j+1}, \ldots, \epsilon'_c)$ range through $\{0, 1\}^{c-j}$. We can think of the ideals in the denominators as $J^{[q]}*_{\epsilon_1'} z_1^q{K'}_1^{[q]}*_{\epsilon_2'} \cdots *_{\epsilon_c'}z_c^q{K'}_c^{[q]}$ where for $i<j$ we have $\epsilon_i'=\epsilon_i$, $z_i=y_i$, $K_i'=K_i$, for $i>j$ we have $K_i'=R$,  $\epsilon_i=0$, and for $i=j$ we have $\epsilon_j'=0$, $z_j=y_j$, $K_j'=K'$ (in the case of the first quotient), or $K_j'=R$ (in the case of the second quotient). We have 
$$N(\epsilon_1', \ldots, \epsilon_c')=\sum_{i=1}^{j-1}\epsilon_i2^{c-i}+ \sum_{i=j+1}^c \epsilon'_i2^{c-i}\le
$$
$$ \sum_{i=1}^{j-1}\epsilon_i2^{c-i}+\sum_{i=j+1}^c2^{c-i}= \sum_{i=1}^{j-1}\epsilon_i2^{c-i}+2^{c-j}-1= N(\epsilon_1, \ldots, \epsilon_c)-1$$
and the desired conclusion follows from the inductive hypothesis.

\end{proof}

\begin{obs}
The proof of Theorem (\ref{main_theorem}) only uses the (LC) assumption for Frobenius powers of ideals of the form $J+y_1K_1 + \ldots y_jK_j$, where $y_1, \ldots, y_c$ are such that equation (\ref{linearcomb}) in Lemma (\ref{recursive}) holds, and $K_1, \ldots, K_c$ are ideals generated by certain products of $y_1, \ldots, y_c$. In particular, we only need to assume (LC) for Frobenius powers of ideals containing $J$.

Furthermore,  if $R$ is graded and $J$ is a homogeneous ideal, then we can pick $y_1, \ldots, y_c$ to be homogeneous elements, and thus it suffices to assume (LC) for Frobenius powers of homogeneous ideals containing $J$.
\end{obs}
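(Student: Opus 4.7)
The plan is to audit the proof of Theorem~\ref{main_theorem} and of the supporting Lemma~\ref{recursive}, locating each explicit invocation of the (LC) hypothesis and checking two things: that the ideal in question contains $J$, and that it has the prescribed form $J + y_1 K_1 + \cdots + y_j K_j$. Since the observation is not an independent result but a refinement of what has already been proved, my proof will be essentially a careful bookkeeping exercise rather than a new argument.

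The only place where (LC) is actually invoked is inside the proof of Lemma~\ref{recursive}, in the inductive claim that the families $J^{[q]} *_{\epsilon_1} f_1^q K_1^{[q]} *_{\epsilon_2} \cdots *_{\epsilon_N} f_N^q K_N^{[q]}$ satisfy (LC). The induction is on the largest index $j$ with $\epsilon_j = 1$; the base case $j = 0$ is literally (LC) applied to the Frobenius powers of the single ideal $J + f_1 K_1 + \cdots + f_N K_N$, which contains $J$. The inductive step embeds the family under consideration into another family of the same syntactic form (with one fewer $\epsilon_i = 1$, at the cost of multiplying some $f_i$ by $f_j$), so it does not raise any (LC) hypothesis outside the prescribed class. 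I would then follow through the proof of Theorem~\ref{main_theorem} and record that the $f_i$'s appearing in these invocations are precisely the $y_1, \ldots, y_c$ produced by Lemma~\ref{recursive}(b), together with auxiliary elements $z_i$ introduced recursively via Lemma~\ref{recursive}(c); and the $K_i$'s are always built from products of these elements, as is visible from the definition of $K'$ around equation~(\ref{conclusion}). This verifies the first assertion.

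For the graded refinement I would rerun the same audit while insisting on homogeneity. Every element selection made in the proof is of the ``avoid all associated primes of a certain ideal except $\mathfrak{m}$'' type (via Fact~\ref{facts}(c)), applied to an ideal built from $J$ and previously chosen elements. By induction on the construction, such an ideal is homogeneous whenever all previously chosen elements are homogeneous; its associated primes are then homogeneous, and a graded version of countable prime avoidance allows a homogeneous choice at the next step. Hence $y_1, \ldots, y_c$ (and the auxiliary $z_i$'s) may all be taken homogeneous, and all the ideals to which (LC) is applied are homogeneous and contain $J$. The main obstacle I foresee is purely organizational: one must confirm that the recursive invocations of Lemma~\ref{recursive}(c) inside the theorem's proof never introduce an ideal outside the claimed class, and, in the graded case, that the homogeneous form of countable prime avoidance underlying Fact~\ref{facts}(c) is genuinely available in the setting at hand.
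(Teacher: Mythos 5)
Your audit is correct and coincides with the paper's own (implicit) justification: the Observation carries no separate proof in the paper, being exactly the bookkeeping you describe — the only (LC) invocations in the proof of Theorem~\ref{main_theorem} occur through Lemma~\ref{recursive}, where the base case $j=0$ applies (LC) to Frobenius powers of $J+f_1K_1+\cdots+f_NK_N$ and the inductive step stays inside that class. Your flagged caveat about needing a homogeneous form of countable prime avoidance in the graded case is a reasonable point of care but does not change the argument.
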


\end{document}